\title{Toric Ideals of Lattice Path Matroids and Polymatroids}
\author{Jay Schweig \footnote{University of Kansas; jschweig@math.ku.edu}}
\newtheorem{theorem}{Theorem}[section]
\newtheorem{proposition}[theorem]{Proposition}
\newtheorem{lemma}[theorem]{Lemma}
\newtheorem{definition}[theorem]{Definition}
\newtheorem{conjecture}[theorem]{Conjecture}
\newtheorem{example}[theorem]{Example}
\newcommand{\G}{\mathcal{G}}
\newcommand{\ssection}[1]{%
  \section[#1]{\centering\normalfont\scshape #1}}
\newcommand{\ssubsection}[1]{%
  \subsection[#1]{\raggedright\normalfont\itshape #1}}
\begin{document} 

\maketitle

\begin{abstract}
We show that the toric ideal of a lattice path polymatroid is generated by quadrics corresponding to symmetric exchanges, and give a monomial order under which these quadrics form a Gr\"obner basis.  We then obtain an analogous result for lattice path matroids.  
\end{abstract}

\ssection{Introduction}
\label{intro}
If $B = \{i_1, i_2, \ldots, i_r\}$ is a basis of a matroid $M$, the toric map of $M$ sends the base ring variable $Y_B$ to the monomial $x_{i_1}x_{i_2} \cdots x_{i_r}$ and is naturally extended over all polynomials of variables indexed by bases of $M$.  White has conjectured \cite{white} that the kernel of this map is generated by quadratic binomials corresponding to symmetric exchanges between pairs of bases of $M$.  

Lattice path matroids, introduced by Bonin, de Mier and Noy in \cite{bdn} and studied further in \cite{bdn2}, are an especially nice class of matroids whose bases are in correspondence with certain planar lattice paths.  Subclasses of these matroids appeared in \cite{carly} and \cite{federico}.  In \cite{me}, the study of enumerative properties of such matroids gave rise to a related class of discrete polymatroids, in the sense of Herzog and Hibi \cite{hh}, known as lattice path polymatroids.

As in \cite{hh}, toric maps can be defined for discrete polymatroids as well, inspiring a generalization of White's conjecture.  In Theorem \ref{main}, we show that White's conjecture holds for lattice path polymatroids.  We also provide a monomial order under which the generating set of symmetric exchange binomials forms a Gr\"obner basis.  We then show how a lattice path matroid can be realized as the set of squarefree bases of a related lattice path polymatroid, allowing us to prove an analogue of Theorem \ref{main} for such matroids.  

\ssection{Preliminaries}
\label{sec:1}

We assume the reader has a basic knowledge of matroid theory (see \cite{oxley}).  All our monomials are in the variables $\{x_0, x_1, x_2, \ldots\}$.  When $m$ is a monomial, we write $d_i(m)$ to mean the degree of $x_i$ in $m$.   

\begin{definition}
Let $\Gamma$ be a finite collection of monomials.  Then $\Gamma$ is a \emph{discrete polymatroid} if it satisfies the following two properties:  1) If $m\in \Gamma$ and $m'$ divides $m$, then $m' \in \Gamma$, and 2) If $m, m' \in \Gamma$ and the degree of $m$ is greater than that of $m'$, there exists an index $i$ such that $d_i(m) > d_i(m')$ and $x_im' \in \Gamma$. 
\end{definition}

Thus, a matroid can be viewed as a squarefree discrete polymatroid.  It is easily seen that the maximal monomials of a discrete polymatroid $\Gamma$ are all of the same degree.   In keeping with standard matroid terminology, we refer to these maximal monomials as \emph{bases}, and we say their degree is the \emph{rank} of $\Gamma$.  Discrete polymatroids were introduced by Herzog and Hibi in \cite{hh}, where the following polymatroidal analogue of the classical symmetric exchange property for matroids was proven.

\begin{proposition}
Let $m$ and $m'$ be bases of a discrete polymatroid $\Gamma$, and choose $i$ with $d_i(m) > d_i(m')$.  Then there exists an index $j$ with $d_j(m) < d_j(m')$, such that both $\frac{x_j}{x_i} m$ and $\frac{x_i}{x_j} m'$ are bases of $\Gamma$.
\end{proposition}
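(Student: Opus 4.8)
The plan is to reduce the statement to the classical symmetric exchange property for matroids, which (as the surrounding text suggests) we may take as known. It is worth first noting why a naive argument falls short: applying property (2) of the definition to the pair $m/x_i$ and $m'$ produces at once \emph{some} index $j$ with $d_j(m) < d_j(m')$ for which $\frac{x_j}{x_i}m$ is again a basis, so the ``one-sided'' exchange is essentially free. The genuine content of the proposition is that a \emph{single} index $j$ can be chosen to witness a legal exchange on $m$ and on $m'$ simultaneously, and the cleanest route to this is to spread $\Gamma$ out into an honest matroid and invoke the matroid version there.

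Concretely, I would first fix an $n$ so that every monomial of $\Gamma$ is supported on $x_0,\dots,x_n$, set $c_i=\max\{d_i(m):m\in\Gamma\}$, and build a matroid $N$ on the ground set $E=E_0\sqcup\cdots\sqcup E_n$, where $E_i$ is a set of $c_i$ ``copies'' of the index $i$. Declaring $S\subseteq E$ independent precisely when $\prod_i x_i^{|S\cap E_i|}\in\Gamma$, heredity of the independent sets is immediate from property (1), and the exchange axiom for $N$ follows from property (2): if $|S|<|T|$, then the monomial of $T$ has larger degree than that of $S$, property (2) supplies an index $i$ with $x_i\cdot(\text{monomial of }S)\in\Gamma$ and with $T$ using more copies of $i$ than $S$ does, and a counting argument then lets us take the new element inside $E_i\cap(T\setminus S)$. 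The point needing the most care — and the one I expect to be the main obstacle — is checking that the \emph{bases} of $N$ are exactly the sets $S$ whose associated monomial is a basis of $\Gamma$; the nontrivial inclusion, that a maximal independent $S$ has a maximal monomial, uses property (2) together with the already-noted fact that all maximal monomials of $\Gamma$ have the same degree, so that a sub-maximal $S$ can always be enlarged.

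With $N$ in hand, given bases $m,m'$ of $\Gamma$ and an index $i$ with $d_i(m)>d_i(m')$, I would choose bases $B,B'$ of $N$ representing $m$ and $m'$ by taking $B\cap E_k$ and $B'\cap E_k$ to be initial segments (under a fixed ordering of each $E_k$) of sizes $d_k(m)$ and $d_k(m')$. This compatibility forces $B'\setminus B$ to lie entirely in the classes $E_j$ with $d_j(m')>d_j(m)$, and symmetrically $B\setminus B'\subseteq\bigcup_{d_k(m)>d_k(m')}E_k$. Now pick any $e_0\in E_i\cap(B\setminus B')$ and apply classical symmetric exchange in $N$: there is an $f\in B'\setminus B$ with $B-e_0+f$ and $B'-f+e_0$ both bases of $N$. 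If $j$ is the index with $f\in E_j$, then $f\in B'\setminus B$ gives $d_j(m)<d_j(m')$ (in particular $j\neq i$), and passing $B-e_0+f$ and $B'-f+e_0$ back through the monomial map yields exactly $\frac{x_j}{x_i}m$ and $\frac{x_i}{x_j}m'$ as bases of $\Gamma$, which completes the proof.
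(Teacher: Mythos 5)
Your argument is correct, but note that the paper itself does not prove this proposition: it is quoted as a known result of Herzog and Hibi \cite{hh}, so there is no in-paper proof to match. Your route is the standard reduction to matroids by ``splitting into parallel copies'': the set system you define on $E_0\sqcup\cdots\sqcup E_n$ is indeed a matroid (heredity from property (1), augmentation from property (2) exactly as you say), its bases are precisely the sets whose associated monomial is a basis of $\Gamma$, and the key technical device --- representing $m$ and $m'$ by \emph{nested initial segments} in each class $E_k$ --- is exactly what forces $B\setminus B'$ and $B'\setminus B$ to live only in classes where the degrees differ in the appropriate direction, so that the element $f$ produced by classical symmetric basis exchange (Brylawski/Greene/Woodall) translates back into a single index $j$ with $d_j(m)<d_j(m')$ witnessing both exchanges. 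Granting the classical matroid theorem, which the surrounding text treats as known, the proof is complete; the only caveat is that it is not self-contained, whereas Herzog and Hibi's original argument works intrinsically with the discrete polymatroid (via its rank function and exchange axioms) and does not pass through an auxiliary matroid. What your approach buys is brevity and transparency: the polymatroid statement is exposed as nothing more than the matroid statement seen through the lens of parallel elements, and the compatible-representative trick is the one genuinely new ingredient, which you identify and execute correctly.
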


In the case of the above proposition, we say that the bases $\frac{x_j}{x_i} m$ and $\frac{x_i}{x_j} m'$ are obtained from $m$ and $m'$ via a \emph{symmetric exchange}.  

\ssubsection{Lattice paths}

Fix two integers $n, r >0$.  For our purposes, a \emph{lattice path} is a sequence of unit-length steps in the plane, each either due north or east, beginning at the origin and ending at the point $(n, r)$.  For a lattice path $\sigma$, define a set $N(\sigma) \subseteq [n+r]$ by the following rule: $i\in N(\sigma) \Leftrightarrow$ the $i^{th}$ step of $\sigma$ is north.

Let $\sigma$ and $\tau$ be lattice paths.  We say that $\sigma$ is above $\tau$ if for all $i \leq n$ the $i^{th}$ east step of $\sigma$ lies on or above the $i^{th}$ east step of $\tau$.  In this case, we write $\sigma \succeq \tau$.   

Now fix two lattice paths $\alpha$ and $\omega$ with $\alpha \succeq \omega$.  

\begin{theorem}
\cite{bdn} The collection $\{N(\sigma): \alpha \succeq \sigma \succeq \omega\}$ is the set of bases of a matroid.
\end{theorem}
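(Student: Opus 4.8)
The plan is to reduce the statement to the standard fact that a transversal matroid with an ``interval'' presentation has exactly the expected bases. First I would translate the geometric hypotheses into a combinatorial condition on the sets $N(\sigma)$. Writing $N(\alpha) = \{a_1 < \cdots < a_r\}$ and $N(\omega) = \{b_1 < \cdots < b_r\}$, the relation $\sigma \succeq \tau$ is equivalent to the statement that every prefix of $\sigma$ contains at least as many north steps as the corresponding prefix of $\tau$, which in turn is equivalent to the componentwise (Gale) inequality on the sorted north-step sets. Hence $\alpha \succeq \sigma \succeq \omega$ holds precisely when $N(\sigma) = \{s_1 < \cdots < s_r\}$ satisfies $a_i \le s_i \le b_i$ for every $i$. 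Note that $\alpha \succeq \omega$ forces $a_i \le b_i$, so the intervals $\{a_i, a_i+1, \ldots, b_i\}$ are nonempty, and every such $N(\sigma)$ has size exactly $r$.

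Next I would introduce the transversal matroid $M$ on ground set $[n+r]$ presented by the intervals $A_i = \{a_i, a_i+1, \ldots, b_i\}$ for $i = 1, \ldots, r$. Since $a_1 < a_2 < \cdots < a_r$ and $a_i \in A_i$, the set $\{a_1, \ldots, a_r\}$ is a partial transversal of size $r$, so $M$ has rank $r$, and its bases are exactly the size-$r$ subsets of $[n+r]$ admitting a system of distinct representatives for $(A_1, \ldots, A_r)$. I would then check that these bases are precisely the sets described above. If $\{s_1 < \cdots < s_r\}$ satisfies $a_i \le s_i \le b_i$, then $s_i \mapsto i$ is such a system. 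Conversely, given a bijection $\phi$ with $\phi(i) \in A_i$, suppose $s_i > b_i$ for some $i$; since $b_1 \le \cdots \le b_r$, no element of $\{s_i, s_{i+1}, \ldots, s_r\}$ can equal $\phi(j)$ for any $j \le i$, forcing the $i$ distinct values $\phi(1), \ldots, \phi(i)$ into the $(i-1)$-element set $\{s_1, \ldots, s_{i-1}\}$, a contradiction; a symmetric argument (using $a_1 \le \cdots \le a_r$) rules out $s_i < a_i$. Therefore $\{N(\sigma) : \alpha \succeq \sigma \succeq \omega\}$ is exactly the set of bases of $M$, and in particular is the set of bases of a matroid.

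The only genuinely delicate point is the second direction of this last equivalence, and the short counting argument above dispatches it; the rest is bookkeeping. A self-contained alternative avoiding transversal matroids is to verify the basis exchange axiom directly for the family $\mathcal{B} = \{S : a_i \le s_i \le b_i\}$: given $S, T \in \mathcal{B}$ and $x \in S \setminus T$, removing $x$ and re-inserting a suitable $y \in T \setminus S$ shifts only a contiguous block of indices, and the monotonicity of the $a_i$ and $b_i$ shows the interval constraints survive; the real work there is choosing $y$ correctly, which again reduces to an extremal comparison between $S$ and $T$. I would lead with the transversal-matroid argument, since it is the cleanest.
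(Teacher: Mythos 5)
Your argument is correct: the reduction of $\alpha \succeq \sigma \succeq \omega$ to the componentwise condition $a_i \le s_i \le b_i$ is right, and the Hall-type counting step showing that any system of distinct representatives for the intervals $A_i = \{a_i,\ldots,b_i\}$ forces $a_i \le s_i \le b_i$ is exactly the delicate point and is handled correctly. Note that the paper itself offers no proof of this statement --- it is quoted from Bonin--de Mier--Noy \cite{bdn} --- and your route is essentially theirs: in \cite{bdn} lattice path matroids are \emph{defined} as transversal matroids presented by these intervals, and the content of the cited theorem is precisely the identification of the bases (maximal partial transversals) with the paths between $\omega$ and $\alpha$, proved by the same interval/SDR comparison you give. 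So your proposal is a correct, self-contained reconstruction of the source's argument rather than a genuinely different proof; the direct verification of basis exchange you sketch at the end would be an alternative, but the transversal presentation is the cleaner and the standard one.
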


We write $\mathcal{M}(\alpha, \omega)$ to denote the matroid determined by the paths $\alpha$ and $\omega$.  Matroids arising in this fashion are known as \emph{lattice path matroids}.  For any lattice path $\sigma$, define a monomial $m(\sigma)$ by the following rule: the degree of $x_i$ in $m(\sigma)$ is the number of north steps taken by $\sigma$ along the vertical line $x = i$.  

\begin{theorem}
\cite{me} The collection $\{ m(\sigma): \alpha \succeq \sigma \succeq \omega\}$ is the set of bases of a discrete polymatroid.
\end{theorem}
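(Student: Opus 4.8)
The plan is to verify directly that
$\mathcal{B} := \{\, m(\sigma) : \alpha \succeq \sigma \succeq \omega \,\}$
satisfies the basis-exchange property that characterizes base sets of discrete polymatroids (Herzog--Hibi, \cite{hh}): a nonempty finite collection of monomials all of the same degree is the set of bases of a discrete polymatroid if and only if, for all $m, m' \in \mathcal{B}$ and every $i$ with $d_i(m) > d_i(m')$, there is a $j$ with $d_j(m) < d_j(m')$ and $\frac{x_j}{x_i}\, m \in \mathcal{B}$. The collection $\mathcal{B}$ is nonempty (it contains $m(\alpha)$ and $m(\omega)$), and every $m(\sigma)$ has degree $r$, so only the exchange condition requires work. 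Throughout I write $d_i(\sigma)$ for $d_i(m(\sigma))$.

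First I would set up coordinates. Encode a lattice path $\sigma$ by the vector $b^\sigma=(b^\sigma_0,\dots,b^\sigma_n)$ where $b^\sigma_k$ is the height of the $(k{+}1)$-st east step of $\sigma$ for $k<n$ and $b^\sigma_n=r$. Then $\sigma$ ranges over exactly the weakly increasing integer vectors with $b^\sigma_n=r$, one has $d_k(\sigma)=b^\sigma_k-b^\sigma_{k-1}$ (with the convention $b^\sigma_{-1}=0$), and the condition $\alpha\succeq\sigma\succeq\omega$ translates precisely into $b^\omega_k\le b^\sigma_k\le b^\alpha_k$ for all $k$. The operation that matters is the ``north-step move'': for indices $j<i$, replacing $b^\sigma_k$ by $b^\sigma_k+1$ for $j\le k\le i-1$ (and leaving all other coordinates fixed) changes the exponent vector of $m(\sigma)$ only by adding $1$ to $d_j$ and subtracting $1$ from $d_i$, i.e.\ it sends $m(\sigma)$ to $\frac{x_j}{x_i}m(\sigma)$; the analogous downward move on the interval $[i,j-1]$ does this when $j>i$. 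A short check shows such a move yields a valid path --- weakly increasing and inside the $\alpha/\omega$ window --- exactly when $d_i(\sigma)\ge 1$ (needed for monotonicity at the hinge index $i$) together with $b^\sigma_k<b^\alpha_k$ on the shifted interval for an upward move toward a smaller index, or $b^\sigma_k>b^\omega_k$ on it for a downward move toward a larger index. The requirement $d_i(\sigma)\ge 1$ is automatic since $d_i(\sigma)>d_i(\tau)\ge 0$.

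Now fix $\sigma,\tau$ with $m=m(\sigma)$, $m'=m(\tau)$ and an index $i$ with $d_i(\sigma)>d_i(\tau)$, and set $g_k=d_k(\sigma)-d_k(\tau)$, so $g_i>0$ and $\sum_{k=0}^n g_k=0$. Let $[p,q]\ni i$ be the largest interval of indices with $b^\sigma_k<b^\alpha_k$ for $p\le k\le i-1$ and $b^\sigma_k>b^\omega_k$ for $i\le k\le q-1$; by the previous paragraph a valid exchange is available at any $j\in[p,q]\setminus\{i\}$ with $g_j<0$. Suppose there is no such $j$, i.e.\ $g_j\ge 0$ for all $j\in[p,q]\setminus\{i\}$. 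Then $\sum_{k=p}^{q} g_k>0$. But this sum telescopes to $(b^\sigma_q-b^\tau_q)-(b^\sigma_{p-1}-b^\tau_{p-1})$, and maximality of $[p,q]$ forces each endpoint to be ``blocked'': either $q=n$, giving $b^\sigma_q=b^\tau_q=r$, or $b^\sigma_q=b^\omega_q\le b^\tau_q$; and either $p=0$, giving $b^\sigma_{p-1}=b^\tau_{p-1}=0$, or $b^\sigma_{p-1}=b^\alpha_{p-1}\ge b^\tau_{p-1}$. In every combination $(b^\sigma_q-b^\tau_q)-(b^\sigma_{p-1}-b^\tau_{p-1})\le 0$, a contradiction. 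Hence a valid $j$ exists, which completes the exchange argument and hence the proof.

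The routine parts are the dictionary between paths and profile vectors and the verification that the north-step move keeps the profile weakly increasing and inside the window. The genuinely delicate point is the bookkeeping of the last paragraph: correctly identifying the maximal reachable interval $[p,q]$ around $i$ and checking that each way it can terminate --- running off an end of $\{0,\dots,n\}$, or $\sigma$ meeting $\alpha$ to the left of $i$, or $\sigma$ meeting $\omega$ to the right of $i$ --- forces the telescoped difference to be nonpositive. I expect that case analysis, together with confirming that the degenerate situations $i=0$, $i=n$, and $[p,q]=\{i\}$ are handled by the very same formulas, to be the main obstacle.
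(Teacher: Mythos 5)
Your proposal is correct. Note, however, that the paper offers no proof of this statement to compare against: it is quoted from the reference \cite{me}, so the only comparison available is with the paper's general style of argument. Your route is a direct verification of the Herzog--Hibi characterization of base sets of discrete polymatroids (the one-sided exchange property for a finite set of monomials of equal degree), carried out in the ``height profile'' coordinates $b^\sigma_k$. The dictionary $d_k(\sigma)=b^\sigma_k-b^\sigma_{k-1}$, the observation that $\frac{x_j}{x_i}m(\sigma)$ is realized by raising (resp.\ lowering) the profile by one unit on the interval between $j$ and $i$, the validity criteria ($d_i(\sigma)\ge 1$ plus strict inequality against $\alpha$ or $\omega$ on the shifted interval), and the blocked-interval telescoping argument are all correct, including the degenerate cases $p=0$, $q=n$, and $[p,q]=\{i\}$: in every case $b^\sigma_q-b^\tau_q\le 0$ and $b^\sigma_{p-1}-b^\tau_{p-1}\ge 0$, contradicting $\sum_{k=p}^q g_k>0$. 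This is self-contained and, pleasantly, the single-north-step surgeries you use are exactly the kind of path moves the paper later employs in the proof of Proposition \ref{switch}, so your argument fits naturally with the paper's methods; what it buys over merely citing \cite{hh} and \cite{me} is an explicit, elementary proof that also exhibits concretely which exchanges are available, which is the combinatorial content the rest of the paper relies on.
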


\begin{figure}[htp]
\centering
\includegraphics[height=1.1in]{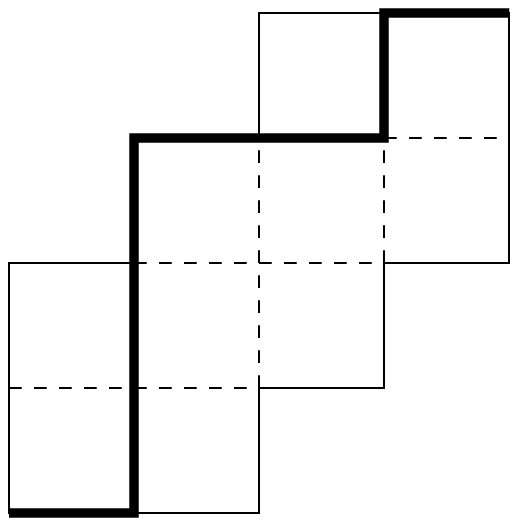}
\caption{The lattice path matroid $\mathcal{M}(\alpha, \omega)$ where $N(\alpha) = \{1, 2, 4, 6\}$ and $N(\omega) = \{3, 5, 7, 8\}$.  If $\sigma$ is the bold path, $m(\sigma) = x_1^3x_3$.}\label{path}
\end{figure}

We call such discrete polymatroids \emph{lattice path polymatroids}, and write $\Gamma(\alpha, \omega)$ to denote the polymatroid determined by $\alpha$ and $\omega$.  For a lattice path $\sigma$ whose last step is east, let $\sigma^+$ be the path obtained from $\sigma$ by removing its last east step, and adding an east step at the beginning.  It is easily seen that the lattice path matroid $\mathcal{M}(\alpha,\omega)$ is coloop-free if and only if $\alpha$ and $\omega$ share no north steps (and thus the last step of $\alpha$ is east).  That is, $\mathcal{M}(\alpha, \omega)$ is coloop-free if and only if $\alpha^+ \succeq \omega$.  The following theorem motivated the introduction of lattice path polymatroids.  

\begin{theorem}
\cite{me} Suppose the lattice path matroid $\mathcal{M}(\alpha, \omega)$ is coloop-free.  Then its h-vector is the f-vector (or degree sequence) of $\Gamma(\alpha^+, \omega)$.  
\end{theorem}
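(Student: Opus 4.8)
The plan is to express both sides as counts of lattice paths and then match them.

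First, I would reduce the left-hand side to a path statistic. It is classical that $h_i$ counts the bases $N(\sigma)$ of $\mathcal{M}(\alpha,\omega)$ with exactly $i$ internally passive elements relative to the order $1 < 2 < \cdots < n+r$; this follows from $\sum_i h_i t^{r-i} = T_{\mathcal{M}(\alpha,\omega)}(t,1)$ and Crapo's expansion of the Tutte polynomial by internal and external activities, or equivalently from a lexicographic shelling of the independence complex. Examining which symmetric exchanges are available shows that a north step of $\sigma$ is internally passive exactly when it lies over a column along which $\sigma$ runs strictly below $\alpha$ immediately before that column; informally, the \emph{movable} north steps are precisely those sitting over the portion of the strip where there is still vertical room beneath $\alpha$. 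Thus $h_i$ is the number of paths $\sigma$ with $\alpha \succeq \sigma \succeq \omega$ having exactly $i$ north steps over such columns.

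Next, I would describe the degree-$i$ monomials of $\Gamma(\alpha^+, \omega)$ explicitly. Encoding a monomial $m$ of degree $i$ by the partial sums $d_0(m),\ d_0(m)+d_1(m),\ \ldots$ of its exponent sequence, one checks that $m \in \Gamma(\alpha^+, \omega)$ precisely when this staircase lies weakly below the staircase of $\alpha^+$ and weakly above the staircase of $\omega$ lowered by $r - i$. The lower bound encodes that the $i$ forced north steps of $m$ must be completable by $r - i$ further north steps to a path above $\omega$; the upper bound involves $\alpha^+$ rather than $\alpha$ because prepending an east step shifts $\alpha$'s staircase one column to the right, and that shift is exactly the slack a north step gains once it has been slid off the top of the strip. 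Here the coloop-free hypothesis — equivalently $\alpha^+ \succeq \omega$ — is what keeps these two staircases from crossing, so that $\Gamma(\alpha^+, \omega)$ really does contain monomials of every degree $0 \le i \le r$.

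The central step, which I expect to be the main obstacle, is a degree-preserving bijection between the bases $\sigma$ of $\mathcal{M}(\alpha,\omega)$ with $i$ movable north steps and the degree-$i$ monomials of $\Gamma(\alpha^+, \omega)$. Morally, one slides the movable north steps of $\sigma$ leftward until they rest against the staircase of $\alpha^+$ and reads off the resulting monomial; conversely, from a monomial one places its north steps in the prescribed columns and re-inserts the remaining $r - i$ north steps greedily against $\alpha$, the reconstruction staying above $\omega$ by virtue of $\alpha^+ \succeq \omega$. The delicacy is making this well defined and invertible: recording each movable north step in the column it already occupies overshoots the $\alpha^+$ bound whenever $\sigma$ jumps back up to meet $\alpha$, whereas pushing every movable step as far left as possible merges distinct bases, so the correct rule must interleave these behaviors according to the column profile of $\alpha$, and most of the proof is in verifying bijectivity and that degrees match.

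An alternative that sidesteps the bijection is to compare generating functions: $\sum_i h_i t^{r-i}$ is the Tutte specialization $T_{\mathcal{M}(\alpha,\omega)}(t,1)$, for which Bonin, de Mier and Noy \cite{bdn} give an explicit lattice-path formula, and $\sum_i f_i t^i$ is obtained from the staircase description above; one then checks the two closed forms agree. I would fall back on this if the bijection proves too fiddly, accepting a more computational argument.
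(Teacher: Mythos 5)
This theorem is not proved in the paper at all: it is imported from \cite{me}, so there is no internal argument to compare against, and your proposal has to stand on its own. As it stands it has two genuine gaps, one of which is an outright error.

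First, your characterization of internally passive elements is wrong as stated. You make passivity a property of the \emph{column} (``lies over a column along which $\sigma$ runs strictly below $\alpha$ immediately before that column''), but two north steps in the same column can have different activity status. Concretely, take $n=r=2$, $N(\alpha)=\{1,3\}$, $N(\omega)=\{3,4\}$ (coloop-free), and the basis $\sigma$ with $N(\sigma)=\{2,3\}$, whose two north steps both sit in column $1$ while $\sigma$ is strictly below $\alpha$ entering that column. Your rule calls both steps passive, but the element $3$ is internally active: the only smaller element outside the basis is $1$, and $\{1,2\}$ is not a basis since its path exceeds $\alpha$ in column $0$. The true count of passive elements for this basis is $1$, and with your rule the resulting distribution $(1,1,3)$ disagrees with the actual $h$-vector $(1,2,2)$ of this matroid (which does equal the degree sequence of $\Gamma(\alpha^+,\omega)$, whose bases are $x_1x_2$ and $x_2^2$). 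The correct criterion is indexwise, not columnwise: the $k$-th north step of $\sigma$ is internally passive exactly when it lies strictly to the right of the $k$-th north step of $\alpha$. Since everything downstream is built on counting ``movable'' steps, this error propagates.

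Second, the central step --- the degree-preserving bijection between bases with $i$ passive steps and degree-$i$ monomials of $\Gamma(\alpha^+,\omega)$ --- is not actually constructed. You yourself observe that both natural rules (record each movable step in its own column, or push every movable step fully left) fail, and defer to ``the correct rule,'' which is precisely the content of the theorem; the staircase description of $\Gamma(\alpha^+,\omega)$ and the fallback via the Tutte-polynomial formula of \cite{bdn} are likewise only asserted, not checked. So what you have is a plausible outline whose first ingredient is false as stated and whose key ingredient is missing. Repairing it would start from the indexwise passivity criterion above, after which a natural candidate map (record, for each passive index $k$, its column, suitably shifted to account for the passage from $\alpha$ to $\alpha^+$) presents itself --- but defining it precisely and proving it is a degree-preserving bijection is exactly the work that remains.
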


\begin{example}
If $\alpha$ is the path consisting of $r$ north steps followed by $n$ east steps and $\omega$ is any other path, then $\mathcal{M}(\alpha, \omega)$ is a \emph{shifted} matroid.  In \cite{carly}, it is shown that every shifted matroid is of this form.  In this case, bases of the polymatroid $\Gamma(\alpha, \omega)$ are generators of the smallest Borel-fixed ideal containing $m(\omega)$ (see \cite{fms}).  
\end{example}

\ssubsection{Toric ideals}

The \emph{base ring} of a polymatroid $\Gamma$ is the polynomial ring $\mathbb{C}[Y_m: m$ is a basis of $\Gamma]$.  If $n$ and $n'$ are obtained from $m$ and $m'$ by a symmetric exchange, we call $Y_mY_{m'} - Y_nY_{n'}$ a \emph{symmetric exchange binomial}.  The \emph{toric ideal} of $\Gamma$ is the kernel of the map $\phi :  \mathbb{C}[Y_m: m$ is a basis of $\Gamma] \rightarrow \mathbb{C}[x_0, x_1, x_2, \ldots]$ defined by 
\[
\phi(Y_{m_1}Y_{m_2} \cdots Y_{m_t}) = m_1m_2 \cdots m_t
\]
and extended by linearity.  Clearly, any symmetric exchange binomial is in the toric ideal of $\Gamma$.  

\begin{conjecture}[White's conjecture, adapted for polymatroids]\label{white}
The toric ideal of $\Gamma$ is generated by symmetric exchange binomials.
\end{conjecture}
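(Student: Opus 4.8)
\medskip
\noindent\emph{Toward a proof.}
Conjecture \ref{white} is White's conjecture \cite{white} transported to the polymatroid setting, and it is open for discrete polymatroids in the same generality in which it is open for matroids; what Theorem \ref{main} will supply is a verification for the class of lattice path polymatroids, so the remarks below are meant only to indicate the shape of a general attack and to isolate what makes the lattice path case tractable. The starting point is the standard reformulation of a toric ideal being generated by a prescribed set of binomials: write $I_\Gamma=\ker\phi$, grade the base ring by the monomial $\phi(Y_{m_1}\cdots Y_{m_t})$, and for a fixed monomial $u$ form the \emph{fiber} consisting of all unordered tuples of bases whose product is $u$; then $I_\Gamma$ is generated by symmetric exchange binomials exactly when every fiber is connected under the moves that replace two factors $Y_mY_{m'}$ by $Y_nY_{n'}$ whenever $(n,n')$ is a symmetric exchange of $(m,m')$. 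So the plan is: \textbf{(i)} reduce to this fiber-connectivity statement; \textbf{(ii)} reduce, by induction on the number $t$ of factors together with a confluence argument (in the spirit of Blasiak's treatment of graphic matroids), to resolving a word two coordinates at a time; and \textbf{(iii)} establish the resulting two-base connectivity statement for the polymatroid in hand.

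Two further reductions are natural. First, one can attempt to descend along minors: if the contraction $\Gamma/i$ and the deletion $\Gamma\setminus i$ (at a variable $x_i$) both satisfy the conjecture, one hopes to lift a resolution of a word in $\Gamma$ by sorting the factors according to their $x_i$-degree; the obstruction is that a symmetric exchange in $\Gamma$ need not restrict to a symmetric exchange in a minor, so this inductive step is delicate. Second, and this is the reduction that makes the polymatroid version plausible granted the matroid version, one can pass to the associated matroid $\widetilde M$ obtained by splitting each variable $x_i$ into $c_i=\max_m d_i(m)$ distinct ground-set elements (Helgason's correspondence); the map $\phi$ then factors through the symmetrization map on the base ring of $\widetilde M$ by the product of symmetric groups permuting the copies of each $x_i$, and White's conjecture for $\widetilde M$ would yield Conjecture \ref{white} for $\Gamma$ after symmetrizing the quadratic generators. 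Of course this only trades one open problem for another.

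The step I expect to be the genuine obstacle is \textbf{(iii)}: for an arbitrary discrete polymatroid there is no known way to certify that two equivalent pairs of bases are joined by symmetric exchanges, and no monomial order is known under which the symmetric exchange binomials form a Gr\"obner basis in general. What rescues the lattice path case is precisely that bases are indexed by lattice paths trapped between $\alpha$ and $\omega$, so a fiber of size two becomes a crossing/routing configuration of two paths with a common $x$-profile, and a sorting argument on the crossings, paired with an explicit term order, delivers both the connectivity and the Gr\"obner basis; this is the content of the proof of Theorem \ref{main}. Producing a substitute for this combinatorial control in the absence of a lattice path structure is exactly the missing ingredient.
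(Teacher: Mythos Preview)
Your assessment is accurate: Conjecture~\ref{white} is stated in the paper as an open conjecture, not a theorem, and the paper makes no attempt to prove it in general. What the paper does prove is the special case of lattice path polymatroids (Theorem~\ref{main}), exactly as you say. So there is no ``paper's own proof'' to compare against, and your decision to write a \emph{Toward a proof} discussion rather than a purported proof is the correct call.

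On the content of your outline: your step~(i), the reduction to connectivity of fibers under symmetric-exchange moves, is precisely the paper's Theorem~\ref{blasiak} (the symmetric exchange graph $\mathcal{G}(\mu)$ formalism). One point worth flagging: your steps~(ii)--(iii) propose reducing to the two-base case by induction on~$t$, but the paper's treatment of the lattice path class does \emph{not} go through such a reduction. Instead it works uniformly with arbitrary~$t$ via Theorem~\ref{Grobnerswap}: it builds a monomial order~$>_\ell$, shows that each directed graph $\mathcal{G}^\ell(\mu)$ has a unique sink (the ``thin'' vertex), and gets both connectivity and the Gr\"obner basis at once. Your description of the lattice path mechanism as ``a sorting argument on the crossings, paired with an explicit term order'' is apt, but the sorting happens on the full $t$-tuple of paths simultaneously rather than pairwise. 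The reductions you sketch via minors and via Helgason's associated matroid are reasonable heuristics but play no role in the paper.
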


For a set $V = \{m_1, m_2, \ldots, m_t\}$ of bases of $\Gamma$, write $M_V$ as short for the base ring monomial $Y_{m_1}Y_{m_2} \cdots Y_{m_t}$.  Now for any monomial $\mu$ of degree $>r$, we define a simple graph $\mathcal{G}(\mu)$, known as a \emph{symmetric exchange graph}, as follows.  The vertices of $\mathcal{G}(\mu)$ are all sets $ V = \{m_1, m_2, \ldots, m_t\}$ of bases of $\Gamma$ with $\phi(M_V) = \mu$ (that is, $m_1 m_2 \cdots m_t   = \mu$), and two vertices $V$ and $W$ are connected by an edge whenever $M_V - M_W = NS$ for some monomial $N$ and symmetric exchange binomial $S$.  Put another way, $V$ and $W$ are connected by an edge if $W$ can be obtained from $V$ by performing a symmetric exchange on two of its constituent bases.  Although $\mathcal{G}(\mu)$ depends on $\Gamma$, we omit this from the notation whenever it will be clear from context.

The following was inspired by Blasiak's techniques in \cite{blasiak}, where Conjecture \ref{white} was proven for graphic matroids. 

\begin{theorem}\label{blasiak} 
Suppose that $\mathcal{G}(\mu)$ is connected for any monomial $\mu$ of degree $>r$.  Then Conjecture \ref{white} holds for $\Gamma$.
\end{theorem}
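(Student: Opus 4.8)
The statement says: if every symmetric exchange graph $\mathcal{G}(\mu)$ is connected, then the toric ideal is generated by symmetric exchange binomials. This is a standard "connectedness implies generation" argument for toric ideals, so I would proceed as follows.

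First, I would recall that the toric ideal $\ker\phi$ is spanned as a $\mathbb{C}$-vector space by binomials $M_V - M_W$ where $V$ and $W$ are multisets of bases with $\phi(M_V) = \phi(M_W)$; this is a general fact about toric ideals of monomial maps (any binomial ideal that is a toric kernel is spanned by such "fiber" binomials, since $\phi$ sends monomials to monomials and a polynomial in the kernel must have its monomials cancel in groups lying over a common image monomial). So it suffices to show every such fiber binomial $M_V - M_W$ lies in the ideal $J$ generated by symmetric exchange binomials.

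Next, fix such a pair $V, W$ with common image $\mu := \phi(M_V) = \phi(M_W)$. Since $\deg M_V = \deg M_W$, the degree of $\mu$ is a multiple of $r$; if this degree is exactly $r$ then $V = W = \{\mu\}$ and the binomial is zero, so assume the degree is $>r$. Then $V$ and $W$ are both vertices of $\mathcal{G}(\mu)$, which is connected by hypothesis. Take a path $V = V_0, V_1, \ldots, V_k = W$ in $\mathcal{G}(\mu)$. Each edge gives $M_{V_{i}} - M_{V_{i+1}} = N_i S_i$ for a monomial $N_i$ and symmetric exchange binomial $S_i$, hence $M_{V_i} - M_{V_{i+1}} \in J$. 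Summing the telescoping identity
\[
M_V - M_W = \sum_{i=0}^{k-1} \bigl(M_{V_i} - M_{V_{i+1}}\bigr)
\]
shows $M_V - M_W \in J$. Therefore $\ker\phi \subseteq J$, and since the reverse inclusion is immediate (symmetric exchange binomials lie in the toric ideal), we get $\ker\phi = J$, which is exactly Conjecture \ref{white} for $\Gamma$.

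**Main obstacle.** There is essentially no obstacle here — the content of Theorem \ref{blasiak} is purely the reduction to a combinatorial connectivity statement, and the proof is the routine telescoping argument above. The one point requiring a word of care is the first step: justifying that $\ker\phi$ is $\mathbb{C}$-spanned by fiber binomials $M_V - M_W$. I would handle this by the usual argument: given $f \in \ker\phi$, group its terms according to the image monomial $\phi$ assigns to each; within each group the coefficients must sum to zero (since the images are equal and $f$ maps to $0$), so that group is a $\mathbb{C}$-linear combination of differences $M_V - M_W$ of its monomials; hence $f$ itself is such a combination. After that, all the real work has been exported to the hypothesis that $\mathcal{G}(\mu)$ is connected, which is precisely what the subsequent sections of the paper will establish for lattice path polymatroids.
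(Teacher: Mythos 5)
Your argument is correct and is essentially the paper's proof: reduce to fiber binomials $M_V - M_W$ over a common image monomial $\mu$, then telescope along a path in the connected graph $\mathcal{G}(\mu)$, each edge contributing a monomial times a symmetric exchange binomial. The only difference is that you spell out the standard justification that the toric ideal is spanned by such fiber binomials, which the paper takes for granted.
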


\begin{proof}
Any polynomial in the toric ideal of $\Gamma$ can be written as a sum of binomials of the form $M_V - M_W$, where $V$ and $W$ are vertices of some $\mathcal{G}(\mu)$.  Since $\mathcal{G}(\mu)$ is connected, there exists a path $V = V_0, V_1, V_2, \ldots, V_k = W$ where each $V_i$ and $V_{i+1}$ are connected by an edge.  Now write 
\[
M_V - M_W = (M_V - M_{V_1}) + (M_{V_1} - M_{V_2}) + (M_{V_2} - M_{V_3}) + \cdots + (M_{V_{k-1}} - M_W).
\]
Since each parenthesized term in this sum is the product of a monomial with a symmetric exchange binomial, the result follows.
\end{proof}

\ssubsection{Gr\"obner bases}

Our treatment here is brief; the reader unfamiliar with the theory of Gr\"obner bases is referred to \cite{eisenbud}.  

Let $>_\ell$ be a total order on monomials of the base ring of a polymatroid $\Gamma$ with $M >_\ell 1$ for any monomial $M \neq 1$.  The order $>_\ell$ is called a \emph{monomial order} if $M >_\ell M'$ implies that $MN >_\ell M'N$ for any monomials $M, M',$ and $N$.  

If $>_\ell$ is a monomial order on the base ring and $\mu$ is a monomial, define a directed graph $\mathcal{G}^\ell(\mu)$ as follows: the vertices and edges are those of $\mathcal{G}(\mu)$.  If $V$ and $W$ are vertices of $\mathcal{G}(\mu)$ joined by an edge, direct the corresponding edge of $\mathcal{G}^\ell(\mu)$ towards $W$ if $M_V >_\ell M_W$ and towards $V$ if $M_W >_\ell M_V$.  Note that $\mathcal{G}^\ell(\mu)$ is acyclic, since $>_\ell$ is a total order.

The following lemma, whose straightforward proof we omit, is an elementary result from graph theory.

\begin{lemma}\label{uniquesink}
Let $G$ be a finite and acyclic directed graph, and suppose $G$ has a unique sink $v_0$.  Then for any vertex $w$ of $G$, there exists a directed path from $w$ to $v_0$.
\end{lemma}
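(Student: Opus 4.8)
The plan is to argue greedily by following outgoing edges. First I would fix an arbitrary vertex $w$ and consider the collection of directed paths in $G$ that start at $w$. Since $G$ is acyclic, no such path can visit any vertex twice, so every directed path starting at $w$ uses at most $|V(G)|$ vertices. In particular this collection is finite and nonempty (it contains the length-zero path at $w$), so it has a member of maximal length, say $P \colon w = u_0 \to u_1 \to \cdots \to u_k$.

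Next I would check that the terminal vertex $u_k$ is a sink. If $u_k$ had an outgoing edge $u_k \to z$, then $z \notin \{u_0, \ldots, u_k\}$, since otherwise appending that edge to $P$ would close up a directed cycle, contradicting acyclicity; but then $w = u_0 \to \cdots \to u_k \to z$ would be a directed path starting at $w$ that is strictly longer than $P$, contradicting the maximality of $P$. Hence $u_k$ has no outgoing edge, i.e.\ $u_k$ is a sink. Because $v_0$ is assumed to be the \emph{unique} sink of $G$, we conclude $u_k = v_0$, and $P$ is the desired directed path from $w$ to $v_0$.

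There is no genuine obstacle here; the only mild care needed is that acyclicity is used twice, once to guarantee that the lengths of paths out of $w$ are bounded (so a maximal one exists) and once to guarantee that a maximal path cannot be extended through an already-visited vertex. An equivalent formulation would induct on the length of the longest directed path leaving $w$, or on $|V(G)|$, but the maximal-path phrasing is cleanest. The lemma will be applied with $G = \mathcal{G}^\ell(\mu)$, which is acyclic precisely because $>_\ell$ is a total order.
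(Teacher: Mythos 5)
Your proof is correct. The paper itself omits the proof of this lemma (calling it straightforward), and your maximal-path argument --- acyclicity bounds path lengths so a maximal directed path from $w$ exists, its endpoint must be a sink, and uniqueness forces that sink to be $v_0$ --- is exactly the standard argument the author presumably had in mind.
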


\begin{theorem}\label{Grobnerswap}
Let $>_\ell$ be a monomial order on the base ring of $\Gamma$, and suppose that $\mathcal{G}^\ell(\mu)$ has a unique sink anytime it is nonempty.  Then Conjecture \ref{white} holds for $\Gamma$ and the symmetric exchange binomials, under the order $>_\ell$, form a Gr\"obner basis for its toric ideal.
\end{theorem}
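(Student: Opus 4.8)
The plan is to deduce Theorem~\ref{Grobnerswap} from Theorem~\ref{blasiak} together with the standard criterion that a generating set of binomials is a Gr\"obner basis precisely when it lets us rewrite every base-ring monomial down to a canonical normal form. First I would observe that the hypothesis — $\mathcal{G}^\ell(\mu)$ has a unique sink whenever it is nonempty — immediately implies via Lemma~\ref{uniquesink} that $\mathcal{G}^\ell(\mu)$ is connected (any two vertices can be joined by first routing each to the common sink), hence $\mathcal{G}(\mu)$ is connected for every monomial $\mu$ of degree $>r$; by Theorem~\ref{blasiak} this already gives that Conjecture~\ref{white} holds for $\Gamma$, so the symmetric exchange binomials generate the toric ideal. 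What remains is the Gr\"obner statement.

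Let $\mathcal{S}$ denote the set of symmetric exchange binomials, and fix the monomial order $>_\ell$ on the base ring, writing $\operatorname{in}(f)$ for the leading term of a polynomial $f$ and $\operatorname{in}(\mathcal{S})$ for the ideal generated by the leading terms of elements of $\mathcal{S}$. To show $\mathcal{S}$ is a Gr\"obner basis for the toric ideal $I$, it suffices to show $\operatorname{in}(I) = \operatorname{in}(\mathcal{S})$; since $\operatorname{in}(\mathcal{S}) \subseteq \operatorname{in}(I)$ is automatic, the work is the reverse inclusion. I would argue it through normal forms: a base-ring monomial $M_V$ is \emph{standard} (reduced modulo $\mathcal{S}$) exactly when no symmetric exchange binomial's leading term divides it, which by construction of $\mathcal{G}^\ell$ means precisely that $V$ is a sink of $\mathcal{G}^\ell(\phi(M_V))$. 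The key claim is then: each fiber $\mathcal{G}^\ell(\mu)$ contains at most one standard monomial — and this is exactly the unique-sink hypothesis. Given this, a standard argument finishes: if $\operatorname{in}(I) \supsetneq \operatorname{in}(\mathcal{S})$, pick a monomial $M_V \in \operatorname{in}(I)$ that is standard with respect to $\mathcal{S}$; since $\phi$ identifies base-ring monomials in the same fiber, there is another monomial $M_W$ in the fiber of $\mu = \phi(M_V)$ with $M_V - M_W \in I$, and iterating the reduction $M_V \mapsto M_W$ (which strictly decreases $>_\ell$ and terminates because $\mathcal{G}^\ell(\mu)$ is finite and acyclic) produces a standard $M_{W_0}$ in the fiber distinct from $M_V$ only if $M_V$ was not itself the sink. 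Tracing this carefully shows $M_V$ must equal the unique sink and hence be reducible by $\mathcal{S}$ after all — contradiction — unless $M_V \in \operatorname{in}(\mathcal{S})$.

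More concretely, the clean way to package the last step: every element of $I$ is a $\mathbb{C}$-linear combination of binomials $M_V - M_W$ with $V, W$ in a common fiber (this is the first line of the proof of Theorem~\ref{blasiak}), so $\operatorname{in}(I)$ is generated by the leading terms $\operatorname{in}(M_V - M_W) = \max_{>_\ell}(M_V, M_W)$ over all such pairs. For each such pair, Lemma~\ref{uniquesink} gives a directed path in $\mathcal{G}^\ell(\mu)$ from the $>_\ell$-larger of $V, W$ down to the sink $v_0$; the first edge of that path corresponds to a symmetric exchange binomial $S \in \mathcal{S}$ whose leading term divides $\max_{>_\ell}(M_V, M_W)$ (it equals $M_{V'}$ where $V'$ is that larger vertex, times a suitable monomial cofactor, matched against the first step). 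Hence every generator of $\operatorname{in}(I)$ lies in $\operatorname{in}(\mathcal{S})$, giving $\operatorname{in}(I) = \operatorname{in}(\mathcal{S})$ and therefore that $\mathcal{S}$ is a Gr\"obner basis.

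The main obstacle I anticipate is bookkeeping the monomial cofactors correctly: an edge of $\mathcal{G}(\mu)$ records that $M_V - M_W = N\cdot S$ for a \emph{monomial} $N$ and a symmetric exchange binomial $S$, so when I say ``$\operatorname{in}(S)$ divides $\max(M_V, M_W)$'' I must verify that the orientation of the edge in $\mathcal{G}^\ell$ is consistent with which term of $S$ is leading — i.e., that $N \cdot \operatorname{in}(S) = \operatorname{in}(N S) = \max_{>_\ell}(M_V, M_W)$, which uses that $>_\ell$ is a monomial order (multiplication by $N$ preserves the order). This compatibility between the combinatorial orientation on $\mathcal{G}^\ell$ and the algebraic leading-term structure is the crux; once it is nailed down, the rest is the formal normal-form/Gr\"obner-criterion machinery, which I would cite from \cite{eisenbud} rather than reprove.
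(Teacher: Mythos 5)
Your proposal is correct in substance, but for the Gr\"obner half it takes a genuinely different route from the paper. The paper runs Buchberger's criterion: the S-polynomial of two symmetric exchange binomials is again a binomial $M_V - M_W$ with $V,W$ in a common fiber, each reduction step modulo the exchange binomials is precisely a directed edge of $\mathcal{G}^\ell(\mu)$, and Lemma \ref{uniquesink} drives both terms to the unique sink $V_0$, so every S-pair reduces to $M_{V_0}-M_{V_0}=0$. You instead verify the definition $\operatorname{in}(I)=\operatorname{in}(\mathcal{S})$ directly, arguing that $\operatorname{in}(I)$ is generated by the maxima $\max_{>_\ell}(M_V,M_W)$ over same-fiber pairs and that each such maximum is divisible by the leading term of the exchange binomial labelling the first edge of a directed path from the larger vertex to the sink; your observation that the edge orientation matches leading terms because $>_\ell$ is a monomial order is exactly the right compatibility check. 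Two small points need to be nailed down for this to be airtight: (i) ``$I$ is spanned by fiber binomials'' does not by itself give that $\operatorname{in}(I)$ is generated by those maxima, because of possible cancellation among several binomials in a representation; the clean fix is to use that $I$ is homogeneous with respect to the multigrading induced by $\phi$, pass to the multigraded component of an element containing its leading monomial, and note that this component is a sum of monomials from a single fiber with coefficients summing to zero, so its leading monomial is indeed such a maximum; (ii) you should say explicitly why the $>_\ell$-larger of $V,W$ is not itself the sink (every vertex has a strictly $>_\ell$-decreasing directed path to the sink, so the sink is the $>_\ell$-minimum of its fiber), which guarantees your path has a first edge. Your middle paragraph's normal-form contradiction sketch is muddled, but it is superseded by the concrete argument in your final packaging, so nothing essential is lost. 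The first half of your write-up (connectivity of each $\mathcal{G}(\mu)$ via Lemma \ref{uniquesink}, then Theorem \ref{blasiak} to get Conjecture \ref{white}) coincides with the paper's; note that once $\mathcal{S}$ is known to be a Gr\"obner basis it automatically generates the ideal, so that half is in fact subsumed by yours. What your route buys is an explicit description of the initial ideal with no S-pair bookkeeping; what the paper's buys is that it never needs more than the spanning statement already quoted in the proof of Theorem \ref{blasiak}, citing Buchberger for the rest.
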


\begin{proof}
To see that Conjecture \ref{white} holds for $\Gamma$, note that Lemma \ref{uniquesink} implies that any two vertices of some $\mathcal{G}(\mu)$ are in the same connected component (since $\mathcal{G}(\mu)$ is just $\mathcal{G}^\ell(\mu)$ with the edge orientations removed).  Therefore every $\mathcal{G}(\mu)$ is connected, and Theorem \ref{blasiak} gives us that the toric ideal of $\Gamma$ is generated by symmetric exchange binomials.

To finish the proof, we apply Buchberger's algorithm (again, see \cite{eisenbud}) to the set of symmetric exchange binomials.  The S-pair of two symmetric exchange binomials can be represented as $M_V - M_W$, for two vertices $V, W$ of some $\mathcal{G}^\ell(\mu)$.  A step in the reduction of this binomial with respect to the set of symmetric exchange binomials consists either of replacing $M_V$ with $M_{V'}$ where $V \rightarrow V'$ is a directed edge of $\mathcal{G}^\ell(\mu)$ or of replacing $M_W$ with $M_{W'}$ where $W \rightarrow W'$ is a directed edge of $\mathcal{G}^\ell(\mu)$.  Let $V_0$ be the unique sink of $\mathcal{G}^\ell(\mu)$.  By Lemma \ref{uniquesink}, this binomial reduces to $M_{V_0} - M_{V_0} = 0$.
\end{proof}

\ssection{Lattice path polymatroids}

For the remainder of this paper, fix $n$ and $r$ and let $\alpha$ and $\omega$ be two lattice paths to $(n, r)$ with $\alpha \succeq \omega$.  To eliminate excess notation we often identify a path $\sigma$ with the associated monomial $m(\sigma)$, writing, for example, $Y_\sigma$ rather than $Y_{m(\sigma)}$ and $d_i(\sigma)$ rather than $d_i(m(\sigma))$.  This section is devoted to proving the following theorem.

\begin{theorem}\label{main}
Let $\Gamma = \Gamma(\alpha, \omega)$ be a lattice path polymatroid.  Then the toric ideal of $\Gamma$ is generated by symmetric exchange binomials.  Moreover, there exists a monomial order on the base ring of $\Gamma$ under which the symmetric exchange binomials form a Gr\"obner basis for the toric ideal.
\end{theorem}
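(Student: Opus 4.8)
The plan is to reduce everything to Theorem~\ref{Grobnerswap}: it suffices to produce a single monomial order $>_\ell$ on the base ring of $\Gamma=\Gamma(\alpha,\omega)$ such that every nonempty directed graph $\mathcal{G}^\ell(\mu)$ has a unique sink. Recall that a basis $\sigma$ of $\Gamma$ is determined by its vector of partial sums $s_i(\sigma)=d_0(\sigma)+\cdots+d_i(\sigma)$, and that the condition $\alpha\succeq\sigma\succeq\omega$ is exactly the ``box'' condition $s_i(\omega)\le s_i(\sigma)\le s_i(\alpha)$ for $0\le i<n$ (with $s_n(\sigma)=r$); conversely every integer vector satisfying these inequalities is a basis. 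I would use a \emph{strictly convex weight order}: fix generic positive reals $\lambda_0,\dots,\lambda_{n-1}$, set $w(Y_\sigma)=\sum_{i=0}^{n-1}\lambda_i\binom{s_i(\sigma)}{2}$, extend additively to monomials of the base ring, and refine the resulting weight preorder to a monomial order by any lexicographic tie-break. Since $w(Y_\sigma)>0$ for every basis, this is a legitimate monomial order, and $\mathcal{G}^\ell(\mu)$ (with edges pointing toward the endpoint of smaller weight) is acyclic as required.

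The point of the weight is that for fixed $\mu$ of degree $tr$ one has $w(M_V)=\sum_i\lambda_i\sum_{\sigma\in V}\binom{s_i(\sigma)}{2}$, a sum of one contribution per column $i$, each a strictly convex function of the multiset $\{s_i(\sigma):\sigma\in V\}$ whose total $\sum_{\sigma\in V}s_i(\sigma)=s_i(\mu)$ is fixed by $\mu$. Hence $w$ is non-constant on $\mathcal{G}(\mu)$, and its minimum is attained at the ``most balanced'' decomposition of $\mu$. I would then identify a canonical such decomposition $V_0$ and show it is well defined: using the fact that the bases of $\Gamma(\alpha,\omega)$ form a distributive lattice under $\sigma\vee\tau$, $\sigma\wedge\tau$ (taking pointwise maxima/minima of paths preserves the box condition and preserves products, since $s_i(\sigma\vee\tau)+s_i(\sigma\wedge\tau)=s_i(\sigma)+s_i(\tau)$), together with the polymatroidal exchange property, one builds a decomposition in which each column multiset is as balanced as the interval $[s_i(\omega),s_i(\alpha)]$ allows. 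Genericity of the $\lambda_i$ forces $V_0$ to be the unique $w$-minimizer over $\mathcal{G}(\mu)$, so $V_0$ is a sink.

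The crux — and the step I expect to fight with — is showing that every vertex $V\ne V_0$ of $\mathcal{G}^\ell(\mu)$ has an outgoing edge, i.e.\ admits a symmetric exchange that strictly decreases $w$; this forces $V_0$ to be the \emph{unique} sink. The natural move is a balancing transfer: pick a column $i$ whose multiset $\{s_i(\sigma):\sigma\in V\}$ is not maximally balanced, pick bases $m,m'\in V$ with $s_i(m)\ge s_i(m')+2$, transfer one unit of north-step mass in $m$ from column $i$ to column $i+1$ and one unit in $m'$ from column $i+1$ back to column $i$; this changes only $s_i$ among all partial sums of all bases, and a short computation gives weight change $\lambda_i\bigl(s_i(m')-s_i(m)+1\bigr)<0$. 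The difficulties are: (i) to be a symmetric exchange in the sense of the polymatroidal exchange property one needs strict domination in the $d$-coordinates, namely $d_i(m)>d_i(m')$ and $d_{i+1}(m)<d_{i+1}(m')$, which is genuinely stronger than $s_i(m)>s_i(m')$, so the choice of $m,m'$ and of which pair of columns to use must be made more carefully; and (ii) when an intermediate column is pinned at $s_j=s_j(\alpha)$ or $s_j=s_j(\omega)$, no short transfer is legal and one is forced into a longer transfer between columns $i$ and $j$, with a more delicate accounting of the weight change across all columns $i,\dots,j-1$. I would handle these by first using weight-neutral $\vee/\wedge$ sorting moves to put $V$ into a convenient (e.g.\ chain) form, and by exploiting the interval structure of the box $[s(\omega),s(\alpha)]$ to argue that an unbalanced $V$ always admits \emph{some} legal weight-decreasing transfer; an induction on $n$ (peeling off a column) or on the degree of $\mu$ may be needed to make this bookkeeping manageable. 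Once every non-$V_0$ vertex is shown to have an outgoing edge, Theorem~\ref{Grobnerswap} yields both conclusions of Theorem~\ref{main}.
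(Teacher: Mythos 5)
Your overall strategy---reduce to Theorem \ref{Grobnerswap} by exhibiting a monomial order for which every nonempty $\mathcal{G}^\ell(\mu)$ has a unique sink---is the same as the paper's, though your order (a strictly convex weight on the partial sums $s_i(\sigma)$, refined by a tie-break) differs from the paper's order, which is lexicographic on vectors recording the topmost north step of each path in each column. The problem is that essentially all of the mathematical content of the theorem lies in the step you explicitly leave open: showing that every vertex $V \neq V_0$ of $\mathcal{G}^\ell(\mu)$ admits a \emph{legal} symmetric exchange that strictly decreases the order. In the paper this is exactly Proposition \ref{switch} together with Lemma \ref{uniquethin}, and the argument there is not routine: one must choose the two columns $i<j$ carefully (the leftmost violation, then the minimal $j$ beyond it) and then verify that both exchanged paths remain between $\alpha$ and $\omega$, via the comparisons $\sigma_p \succeq \sigma_p' \succeq \sigma_q$ and $\sigma_p \succeq \sigma_q' \succeq \sigma_q$. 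Your difficulty (i) is genuine: the Herzog--Hibi exchange property gives \emph{some} legal exchange when $d_i(m) > d_i(m')$, but not necessarily one between the columns your weight computation wants, and conversely the adjacent-column ``balancing transfer'' you compute the weight change for need not be a symmetric exchange at all (it requires $d_{i+1}(m) < d_{i+1}(m')$ and the box condition). Difficulty (ii), columns pinned at $s_j(\alpha)$ or $s_j(\omega)$, is precisely where the legality check has to be done. None of this bookkeeping is carried out, so as written the proposal is a plan rather than a proof.

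Two of the sketched repairs also fail as stated. First, the $\vee/\wedge$ ``sorting moves'' are weight-neutral but are not single symmetric exchanges, hence not edges of $\mathcal{G}^\ell(\mu)$; you cannot use them to normalize $V$ into chain form before producing an outgoing edge, because the sink condition concerns $V$ itself, not a modified vertex. Second, genericity of the $\lambda_i$ does not make the $w$-minimizer unique on $\mathcal{G}(\mu)$: two distinct vertices can have identical column multisets $\{s_i(\sigma) : \sigma \in V\}$ for every $i$---the same multiset of partial-sum values paired into monotone paths in two different ways---and then $w(M_V) = w(M_{V'})$ for \emph{every} choice of $\lambda$. Among such weight-tied vertices the sink question is decided entirely by your unspecified ``any lexicographic tie-break,'' and for an arbitrary tie-break there is no reason a non-canonical, weight-minimal vertex should have an outgoing edge; acyclicity only guarantees that \emph{some} sink exists, not that it is unique. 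So uniqueness of the sink needs its own structural argument (this is what the paper's notion of a thin vertex and the column-by-column determination in Lemma \ref{uniquethin} supply), and it is missing here. Until the crux---every non-canonical vertex has a legal, order-decreasing exchange, under the full order including the tie-break---is proved, the appeal to Theorem \ref{Grobnerswap} does not go through.
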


First, we build a monomial order on the base ring of a lattice path polymatroid $\Gamma$ so that we may apply Theorem \ref{Grobnerswap}.

For any lattice path $\sigma$, define $\ell(\sigma)$ to be the following $nr$-tuple:
\[
(\ell_{0, r}, \ell_{0, r-1}, \ldots, \ell_{0, 1}, \ell_{1, r}, \ell_{1, r-1}, \ldots, \ell_{1,1}, \ldots, \ell_{n-1, r},  \ell_{n-1, r-1}, \ldots, \ell_{n-1, 1})
\]
where $\ell_{i, j} = 1$ if the topmost north step of $\sigma$ along the line $x = i$ goes from $(i, j-1)$ to $(i, j)$, and $\ell_{i, j} = 0$ otherwise.  For a base ring monomial $M =Y_{\sigma_1}Y_{\sigma_2} \cdots Y_{\sigma_t}$, let $\ell(M) = \sum_{1 \leq i \leq t} \ell(\sigma_i)$, where the sum of vectors is taken componentwise.  

\begin{figure}[htp]
\centering
\includegraphics[height = 1.1in]{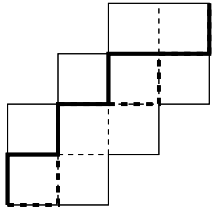}
\caption{If  $n = r = 4$ and $\sigma_1$ and $\sigma_2$ are the paths above, $\ell(Y_{\sigma_1}Y_{\sigma_2}) = (0,0,0,1,0, 0, 2, 0, 0, 1, 0, 0, 0, 1, 0, 0)$.}\label{path}
\end{figure}

Now let $M$ and $M'$ be base ring monomials, and write $M >_{\ell} M'$ whenever $\ell(M)$ lexicographically precedes $\ell(M')$.  Note that $>_\ell$ is not yet a \emph{total} order, as clearly there may be monomials $M \neq M'$ with $\ell(M) = \ell(M')$.  

Indeed, let $M = Y_{\sigma_1} Y_{\sigma_2} \cdots Y_{\sigma_t}$ and $M' = Y_{\tau_1} Y_{\tau_2} \cdots  Y_{\tau_t}$ be two distinct base ring monomials with $\ell(M) = \ell(M')$, where the indexing paths of these monomials are ordered so that whenever $i < j$, $\ell(\sigma_i)$ lexicographically precedes $\ell(\sigma_j)$ and $\ell(\tau_i)$ precedes $\ell(\tau_j)$.  Extend the definition of $>_\ell$ to say that $M >_\ell M'$ if $\ell(\tau_i)$ lexicographically precedes $\ell(\sigma_i)$ for the least $i$ such that $\sigma_i \neq \tau_i$.

Since a path $\sigma$ is clearly determined by the vector $\ell(\sigma)$, this completes $>_\ell$ to a total order on all monomials in the base ring (once we set $M >_\ell 1$ for any monomial $M$).  Moreover, if $M >_\ell M'$, then $MN >_\ell M'N$, since $\ell(MN) = \ell(M) + \ell(N)$ and $\ell(M'N) = \ell(M') + \ell(N)$.  Thus $>_\ell$ is a monomial order.

\begin{definition}
Let $V = \{\sigma_1, \sigma_2, \ldots, \sigma_t\}$ be a vertex of $\mathcal{G}(\mu)$ for some $\mu$, where again we have written $\sigma_i$ to mean $m(\sigma_i)$.  We call the vertex $V$ \emph{thin} if it has the following two properties: 
\begin{itemize}
\item[1:] For any two paths $\sigma_i, \sigma_j \in V$, either $\sigma_i \succeq \sigma_j$ or $\sigma_j \succeq \sigma_i$.  
\item[2:] For any $i$, the $i^{th}$ east steps of any two paths in $V$ are at most a unit length apart.  

\end{itemize}
\end{definition}

\begin{figure}[htp]
\centering
\includegraphics[height=1.1in]{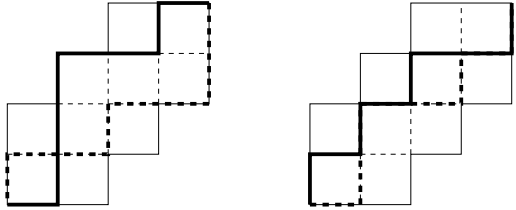}
\caption{Two vertices of the graph $\mathcal{G}(x_0x_1^3x_2x_3x_4^2)$.  The second is thin, while the first is not.}\label{thin}
\end{figure}

Thin vertices, as shown by Proposition \ref{switch} and Lemma \ref{uniquethin}, will be sinks in the directed graphs $\G^\ell(\mu)$.  

\begin{proposition}\label{switch}
Let $V$ be a vertex of some $\G(\mu)$ that is not thin.  Then there is a vertex $V'$ of $\G(\mu)$ resulting from a symmetric exchange between two bases in $V$ such that $M_V >_\ell M_{V'}$.  In other words, $V \rightarrow V'$ is a directed edge of $\G^\ell(\mu)$.   
\end{proposition}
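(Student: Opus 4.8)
The plan is to take a vertex $V$ that fails thinness and exhibit an explicit symmetric exchange that strictly decreases the $\ell$-vector lexicographically (or, in the case of a tie in $\ell$, decreases in the tie-breaking order). The strategy is to look at the first coordinate $\ell_{i,j}$ (in the order $(0,r),(0,r-1),\ldots$) in which $V$ ``wastes'' height, i.e. where some topmost north step could be pushed down, and to realize that downward push as a symmetric exchange.

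First I would set up notation: for each path $\sigma \in V$ and each column $x = i$, record the heights of the north steps of $\sigma$ in that column. Thinness property 1 says the paths in $V$ are totally ordered by $\succeq$; property 2 says that in each column the multiset of (topmost) heights of the paths is as ``compressed'' as possible. I would argue that if $V$ is not thin, then there is a column $i$ and two paths $\sigma, \tau \in V$ so that either they are $\succeq$-incomparable or their $i$th east steps differ by more than one unit, and in the first column $i$ where this happens one can find indices at which $\sigma$ and $\tau$ disagree in the right direction: concretely, there is a height $j$ with $d_i(\sigma) > d_i(\tau)$ (more north steps of $\sigma$ along $x=i$ than of $\tau$), so the symmetric exchange property (the Proposition quoted from \cite{hh}) gives a column $k$ with $d_k(\sigma) < d_k(\tau)$ such that $\frac{x_k}{x_i}m(\sigma)$ and $\frac{x_i}{x_k}m(\tau)$ are again bases of $\Gamma$. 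The key point is that moving a unit of north-step ``mass'' from column $i$ of $\sigma$ to column $k$, and the reverse for $\tau$, alters the topmost-north-step indicator vector $\ell$ in a controlled way: I would choose $i$ and $k$ so that the exchange lowers the topmost north step of one path in the earliest affected coordinate without raising any earlier coordinate, hence $M_V >_\ell M_{V'}$.

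The key steps, in order, are: (i) characterize thinness in terms of the column-height data of the paths in $V$; (ii) show that if $V$ is not thin then there is a ``bad'' column, and pick the lexicographically-earliest coordinate $\ell_{i,j}$ witnessing the failure; (iii) produce two paths $\sigma,\tau\in V$ and indices $i > k$ (or $i < k$, whichever pushes mass in the $\ell$-decreasing direction) with $d_i(\sigma) > d_i(\tau)$, invoke Proposition~\ref{hh} to get a valid symmetric exchange, and verify it stays inside $\Gamma = \Gamma(\alpha,\omega)$ (this is automatic since the Proposition is stated for $\Gamma$ itself); (iv) compute $\ell(M_{V'})$ and check it lexicographically follows $\ell(M_V)$, or, if $\ell$ is unchanged, check that the tie-breaking refinement still orders $M_V >_\ell M_{V'}$. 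Care is needed in (iv) for the tie case: when $\ell$ is unchanged the exchange must still move some $\sigma_i$ to a lexicographically later $\ell(\sigma_i)$, and one must confirm this is consistent with how the tie-break was defined.

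The main obstacle I anticipate is step (iii)–(iv): ensuring the symmetric exchange furnished by the polymatroid exchange property is the \emph{right} one, i.e. that among the possible choices of $k$ one can always select a column making $\ell$ decrease (rather than merely change) in the earliest coordinate, and that pushing mass in one column of a path does not inadvertently raise the topmost north step of that path in an earlier column. This will require a careful case analysis separating the failure of thinness property 1 from that of property 2, and arguing that the ``first bad coordinate'' is genuinely improved. Handling the $\ell(M_V) = \ell(M_{V'})$ subcase via the tie-breaking order, and reconciling it with the requirement that the exchange be between two \emph{distinct} bases of $V$, is the delicate bookkeeping I expect to be the crux.
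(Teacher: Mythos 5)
Your outline follows the same broad strategy as the paper (locate the first column where thinness fails, exchange between the two offending paths, compare $\ell$-vectors), but as written it has a genuine gap at exactly the point you flag as an ``anticipated obstacle,'' and that obstacle is the whole content of the proposition. You propose to get the second column $k$ from the Herzog--Hibi symmetric exchange property, which only guarantees that \emph{some} $k$ with $d_k(\sigma) < d_k(\tau)$ works; it gives you no control over where $k$ sits, and an unlucky $k$ can raise the topmost north step of one of the paths in a column at or before the critical one, so the lexicographic comparison can fail. The paper avoids the black box entirely: it takes $j$ to be the \emph{minimal} column beyond $i$ where the inequality between the two paths reverses, performs the exchange in columns $i$ and $j$ by hand, and proves membership in $\Gamma(\alpha,\omega)$ directly by showing the two new paths are sandwiched between the old ones in the $\succeq$ order (using that between $x=i$ and $x=j$ the upper path stays strictly above, respectively exactly one unit above, the lower one). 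Without some such explicit choice and verification, step (iii) of your plan is not justified.

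The second unresolved point is your step (iv). The correct case analysis is not ``$\ell$ decreases, or if it happens to be unchanged use the tie-break'': when the failure is of property 2 (two $i^{th}$ east steps more than a unit apart) the minimal-$i$, minimal-$j$ exchange strictly lowers $\ell$ at column $i$ because neither new path reaches the old topmost height there; but when only property 1 fails (incomparable paths that are never more than a unit apart), the exchange provably leaves $\ell(M_V)$ \emph{unchanged} -- the upper path is exactly one unit above the lower one throughout $[i,j]$ -- and the entire comparison must run through the tie-breaking refinement, via the chain $\ell(\sigma_p')$ precedes $\ell(\sigma_p)$ precedes $\ell(\sigma_q)$ precedes $\ell(\sigma_q')$ lexicographically. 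Your proposal acknowledges this subcase exists but does not show why $\ell$ is preserved there, nor why the tie-break then orders $M_V >_\ell M_{V'}$; that computation, together with the explicit choice of $j$ and the sandwich argument above, is what an actual proof must supply.
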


\begin{proof}
Let $V = \{\sigma_1, \sigma_2, \ldots, \sigma_t\}$.  We handle two cases, each corresponding to a way in which a vertex may fail to be thin.  

First suppose that the $(i+1)^{st}$ east step of some path in $V$ is more than a unit length above the $(i+1)^{st}$ east step of another path in $V$, and let $i$ be minimal with this property.  Let $\sigma_p$ be the path with the highest $(i+1)^{st}$ east step, and let $\sigma_q$ be the path with the lowest.  By the minimality of $i$, $d_i(\sigma_p) > d_i(\sigma_q)$.  Since the two paths eventually meet, there must be some $j > i$ such that $d_{j}(\sigma_p) < d_{j}(\sigma_q)$.  Let $j$ be minimal with this property, let $\sigma_p'$ be the path obtained from $\sigma_p$ by adding a north step along $x = j$ and removing one along $x = i$, and let $\sigma_q'$ be the path obtained from $\sigma_q$ by adding a north step along $x = i$ and removing one along $x = j$.  Note that $\sigma_p'$ and $\sigma_q'$ are the results of a symmetric exchange between $\sigma_p$ and $\sigma_q$, although we still need to show that both $\sigma_p'$ and $\sigma_q'$ are paths in $\Gamma(\alpha, \omega)$.  To see this, note that the minimality of $j$ implies that every east step of $\sigma_p$ between $x = i $ and $x = j$ is \emph{strictly} above the corresponding east step of $\sigma_q$.  Thus $\sigma_p \succeq \sigma_p' \succeq \sigma_q$ and $\sigma_p \succeq \sigma_q'\succeq \sigma_q$, meaning both $\sigma_p'$ and $\sigma_q'$ are between $\alpha$ and $\omega$.  Let $V'$ be the vertex resulting from this symmetric exchange.  Then $V'$ is identical to $V$ to the left of the line $x = i$.  Since neither $\sigma_p'$ nor $\sigma_q'$ attains the same height on the line $x = i$ as $\sigma_p$, it follows that $M_V >_\ell M_{V'}$.   

Now suppose that no two paths in $V$ are ever more than a unit length apart, and let $i$ be the least index so that $V$ fails to be thin at the line $x = i$.  Then there are paths $\sigma_p$ and $\sigma_q$ of $V$ such that every east step of $\sigma_p$ to the left of $x = i$ is on or above the corresponding east step of $\sigma_q$ (though the two do not always coincide), but the $(i+1)^{st}$ step of $\sigma_q$ is a unit length above that of $\sigma_p$.  It is clear that $d_i(\sigma_p) < d_i(\sigma_q)$.  Let $j$ be the least index greater than $i$ such that $d_j(\sigma_p) > d_j(\sigma_q)$, let $\sigma_p'$ be the path obtained from $\sigma_p$ by deleting a north step along $x = j$ and adding one along $x = i$, and let $\sigma_q'$ be the path obtained from $\sigma_q$ by deleting a north step along $x = i$ and adding one along $x = j$.  The same argument from the first paragraph of this proof shows that both $\sigma_p'$ and $\sigma_q'$ are paths in $\Gamma(\alpha, \omega)$.  Again, let $V'$ be the vertex resulting from this symmetric exchange.  Since every east step of $\sigma_p$ in between $x = i$ and $x = j$ is exactly a unit length above the corresponding east step of $\sigma_q$, it follows that $\ell(M_V) = \ell(M_{V'})$.  Writing $>_{lex}$ for lexicographic order, we have the following chain:
\[
\ell(\sigma_p') >_{lex} \ell(\sigma_p) >_{lex} \ell(\sigma_q) >_{lex} \ell(\sigma_q').
\]
Thus $M_V >_\ell M_{V'}$. 
\end{proof}

\begin{lemma}\label{uniquethin}
Let $\mu$ be a monomial so that $\G(\mu)$ is nonempty.  Then $\G(\mu)$ has exactly one thin vertex.
\end{lemma}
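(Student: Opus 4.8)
The plan is to split the claim into existence and uniqueness of a thin vertex, with essentially all of the content living in uniqueness. For existence I would use Proposition \ref{switch}: since $\G(\mu)$ is finite and nonempty, the total order $>_\ell$ attains a minimum over the monomials $M_V$ as $V$ ranges over the vertices of $\G(\mu)$; choosing a vertex $V_0$ realizing this minimum, if $V_0$ were not thin then Proposition \ref{switch} would produce a vertex $V'$ of $\G(\mu)$ with $M_{V_0} >_\ell M_{V'}$, contradicting minimality. Hence $V_0$ is thin.

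For uniqueness the key idea is to reparametrize lattice paths by the heights of their east steps. For a path $\sigma$ and $1 \le i \le n$, set $h_i(\sigma) = d_0(\sigma) + d_1(\sigma) + \cdots + d_{i-1}(\sigma)$, the height of the $i$-th east step of $\sigma$; then $\sigma$ is recovered from the tuple $(h_1(\sigma), \dots, h_n(\sigma))$, and $\sigma \succeq \tau$ precisely when $h_i(\sigma) \ge h_i(\tau)$ for every $i$. The crucial observation is that for any vertex $V = \{\sigma_1, \dots, \sigma_t\}$ of $\G(\mu)$, interchanging the order of summation gives $\sum_{s} h_i(\sigma_s) = \sum_{j < i} \sum_{s} d_j(\sigma_s) = \sum_{j < i} d_j(\mu)$, so the total height $H_i$ of the $i$-th east steps depends only on $\mu$, not on $V$.

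Now I would feed in the two thinness conditions. If $V$ is thin, condition 1 lets me order its paths as $\sigma_1 \succeq \cdots \succeq \sigma_t$, so for each fixed $i$ the numbers $h_i(\sigma_1) \ge \cdots \ge h_i(\sigma_t)$ form a non-increasing sequence of integers; condition 2 forces this sequence to span at most two consecutive values; and the observation above says it sums to $H_i$. An elementary lemma then shows such a sequence is unique, consisting of $(H_i \bmod t)$ entries equal to $\lceil H_i/t \rceil$ followed by entries equal to $\lfloor H_i/t \rfloor$. Thus $h_i(\sigma_s)$ is forced for every $i$ and $s$, so every $\sigma_s$ — and hence $V$ itself — is determined by $\mu$. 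Therefore $\G(\mu)$ has at most one thin vertex.

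The only step that demands real care, I expect, is checking that both thinness hypotheses genuinely pull their weight: condition 2, combined with the invariance of $H_i$, pins down the \emph{multiset} of east-step heights at each coordinate, while condition 1 is what promotes this to a determination of $h_i(\sigma_s)$ path by path. The supporting facts — that a path is determined by its east-step heights, that $>_\ell$ restricts to a genuine total order on $\G(\mu)$, and the one-line lemma on near-constant integer sequences with prescribed sum — are all routine.
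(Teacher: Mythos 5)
Your proof is correct, and while the existence half coincides with the paper's (a $>_\ell$-minimal vertex of $\G(\mu)$ is a sink of $\G^\ell(\mu)$, hence thin by the contrapositive of Proposition \ref{switch}), your uniqueness argument takes a genuinely different route. The paper reconstructs the thin vertex by induction on columns: assuming $V$ is determined to the left of $x=i$, it uses the thin structure on that line (the $i$-th east steps of the top $k$ paths at height $p$, of the rest at $p-1$) together with the division $d_i(\mu)=qt+r$ to decide, in two cases ($r\le t-k$ and $r>t-k$), exactly which paths take $q+1$ versus $q$ north steps along $x=i$, so that $V$ is determined to the left of $x=i+1$. You instead pass to east-step heights and note the invariance $H_i=\sum_s h_i(\sigma_s)=\sum_{j<i}d_j(\mu)$, which depends only on $\mu$; condition 2 of thinness confines the heights at each column to two consecutive values, and condition 1 (the chain order $\sigma_1\succeq\cdots\succeq\sigma_t$) assigns the larger values to the earlier paths, so each $h_i(\sigma_s)$ is forced to be the balanced value $\lceil H_i/t\rceil$ or $\lfloor H_i/t\rfloor$ in the unique non-increasing arrangement, pinning down every path in closed form with no induction and no case split. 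The paper's induction works directly with the degrees $d_i(\mu)$ and exhibits the thin vertex column by column; your argument is somewhat cleaner and makes transparent how the two thinness conditions divide the labor (condition 2 plus the invariant fixes the multiset of heights per column, condition 1 distributes them among the paths), and the supporting facts you defer (a path is determined by its east-step heights, the near-constant-sequence lemma) are indeed routine. Both are complete proofs of the lemma.
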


\begin{proof}
Existence follows from Proposition \ref{switch} and the easy fact that a finite acyclic directed graph has at least one sink.  

To prove uniqueness, let $V = \{\sigma_1, \sigma_2, \ldots, \sigma_t\}$ be a thin vertex, ordered so that $\sigma_1 \succeq \sigma_2 \succeq \cdots \succeq \sigma_t$, and suppose $V$ is uniquely determined to the left of the line $x = i$ (where we allow $i = 0$).  Since $V$ is thin, there is an index $k$ and a number $p$ so that the $i^{th}$ east steps of the paths $\sigma_1, \sigma_2, \ldots, \sigma_k$ coincide and lie on the line $y = p$ and the $i^{th}$ east steps of the paths $\sigma_{k+1}, \sigma_{k+2}, \ldots, \sigma_t$ coincide and lie on the line $y = p-1$.  Now write $d_i(\mu) = qt + r$, with $r < t$.  

If $r \leq t-k$, then the paths $\sigma_{k+1}, \sigma_{k+2}, \ldots, \sigma_{k+r}$ must each have $q+1$ steps along the line $x = i$, while the rest must have $q$ north steps along this line.  If $r > t-k$, then each of the paths $\sigma_1, \sigma_2, \ldots, \sigma_{r - t + k}$ and $\sigma_{k+1}, \sigma_{k+2}, \ldots, \sigma_{t}$ must have $q+1$ steps along $x = i$, and the rest must have $q$ steps.  

Thus, $V$ is uniquely determined to the left of the line $x = i +1$, and the result follows. 
\end{proof}

\begin{proof}[Proof of Theorem \ref{main}]
Proposition \ref{switch} and Lemma \ref{uniquethin} imply that any $\G^\ell(\mu)$ has a unique sink (namely its thin vertex), so Theorem \ref{Grobnerswap} finishes the proof.  
\end{proof}

\ssection{Lattice path matroids}

The goal of this section is to prove the following analogue of Theorem \ref{main} for lattice path matroids.  

\begin{theorem}
Let $\mathcal{M} = \mathcal{M}(\alpha, \omega)$ be a lattice path matroid.  Then the toric ideal of $\mathcal{M}$ is generated by symmetric exchange binomials.  Moreover, there exists a monomial order on the base ring of $\mathcal{M}$ under which the symmetric exchange binomials form a Gr\"obner basis for the toric ideal.
\end{theorem}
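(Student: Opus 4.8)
The plan is to exhibit $\mathcal{M} = \mathcal{M}(\alpha,\omega)$ as the ``squarefree part'' of an auxiliary lattice path polymatroid and then pull Theorem~\ref{main} back along this identification. For a lattice path $\sigma$ to $(n,r)$, let $\hat\sigma$ denote the path to $(n+r, r)$ obtained by replacing each north step of $\sigma$ with an east step immediately followed by a north step. A direct computation shows that if $N(\sigma) = \{a_1 < a_2 < \cdots < a_r\}$, then the $k$-th north step of $\hat\sigma$ lies along the line $x = a_k$, so $m(\hat\sigma) = \prod_{i \in N(\sigma)} x_i$ is squarefree, and that $\alpha \succeq \sigma \succeq \omega$ if and only if $\hat\alpha \succeq \hat\sigma \succeq \hat\omega$ (both reduce to $a_k \le b_k$ for all $k$, where $N(\omega) = \{b_1 < \cdots < b_r\}$). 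Put $\Gamma = \Gamma(\hat\alpha, \hat\omega)$. The first step is to check that $\sigma \mapsto \hat\sigma$ is a bijection from the bases of $\mathcal{M}$ onto the \emph{squarefree} bases of $\Gamma$: if a basis $m(\tau)$ of $\Gamma$ is squarefree then $\tau$ has at most one north step per column, and because $\hat\alpha$ begins with an east step the condition $\hat\alpha \succeq \tau$ forces the first step of $\tau$ to be east and, more generally, every north step of $\tau$ to be immediately preceded by an east step; hence $\tau = \hat\sigma$ for a unique $\sigma$, necessarily with $\alpha \succeq \sigma \succeq \omega$. Under this bijection the toric map $Y_{N(\sigma)} \mapsto \prod_{i \in N(\sigma)} x_i$ of $\mathcal{M}$ is literally the restriction of the toric map of $\Gamma$ to the subring generated by the squarefree-basis variables, and (using that a polymatroidal symmetric exchange between two squarefree bases is the same thing as a matroid basis exchange) the symmetric exchange binomials of $\mathcal{M}$ are exactly those symmetric exchange binomials of $\Gamma$ supported on squarefree bases.

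The engine of the argument is the observation that a symmetric exchange between two squarefree bases of a discrete polymatroid again yields two squarefree bases: if $m, m'$ are squarefree and $d_i(m) > d_i(m')$, then $d_i(m) = 1$ and $d_i(m') = 0$, any index $j$ with $d_j(m) < d_j(m')$ has $d_j(m) = 0$ and $d_j(m') = 1$, and therefore $\frac{x_j}{x_i} m$ and $\frac{x_i}{x_j} m'$ are both squarefree. Consequently, in every symmetric exchange graph $\mathcal{G}(\mu)$ of $\Gamma$, an edge incident to a vertex all of whose bases are squarefree has both endpoints of this form; that is, the vertices consisting entirely of squarefree bases make up a union of connected components of $\mathcal{G}(\mu)$. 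But Theorem~\ref{main} (via Proposition~\ref{switch} and Lemma~\ref{uniquethin}) tells us that each $\mathcal{G}(\mu)$ is connected. Hence, whenever $\mathcal{G}(\mu)$ contains even one all-squarefree vertex, \emph{every} one of its vertices is all-squarefree.

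Now fix a monomial $\mu$ which is a product of bases of $\mathcal{M}$, equivalently a product of squarefree bases of $\Gamma$. Viewing $\mathcal{M}$ as a discrete polymatroid, its symmetric exchange graph $\mathcal{G}_{\mathcal{M}}(\mu)$ is precisely the subgraph of $\mathcal{G}_{\Gamma}(\mu)$ induced on the all-squarefree vertices, which by the previous paragraph is all of $\mathcal{G}_{\Gamma}(\mu)$, and is therefore nonempty and connected. Theorem~\ref{blasiak} then yields that White's conjecture (Conjecture~\ref{white}) holds for $\mathcal{M}$. For the Gr\"obner statement, transport the monomial order $>_\ell$ of the preceding section across the inclusion of the base ring of $\mathcal{M}$ into that of $\Gamma$; since the graphs $\mathcal{G}^\ell_{\mathcal{M}}(\mu)$ and $\mathcal{G}^\ell_{\Gamma}(\mu)$ then coincide, the latter has a unique sink (its thin vertex) by Proposition~\ref{switch} and Lemma~\ref{uniquethin}, and that sink is itself all-squarefree because every vertex is. Theorem~\ref{Grobnerswap} then gives that the symmetric exchange binomials of $\mathcal{M}$ form a Gr\"obner basis for its toric ideal under this order.

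The main obstacle is the bookkeeping of the first paragraph: verifying carefully that the stretch $\sigma \mapsto \hat\sigma$ interacts correctly with the order $\succeq$, that it identifies the bases of $\mathcal{M}$ with exactly the squarefree bases of $\Gamma(\hat\alpha,\hat\omega)$, and that it matches symmetric exchanges with symmetric exchanges on both sides, so that $\mathcal{G}_{\mathcal{M}}(\mu)$ really is an induced subgraph of $\mathcal{G}_{\Gamma}(\mu)$. Once that dictionary is in place the rest comes essentially for free, since preservation of squarefreeness under symmetric exchange converts the connectivity supplied by Theorem~\ref{main} directly into the connectivity of $\mathcal{G}_{\mathcal{M}}(\mu)$.
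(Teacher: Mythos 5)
Your dictionary in the first paragraph is correct, and it is in fact the same construction the paper uses: your $\hat\sigma$ is exactly the path $\overline{\sigma}$ with $N(\overline{\sigma})=\{a_1+1,\ldots,a_r+r\}$, and the identification of the bases of $\mathcal{M}$ with the squarefree bases of $\Gamma=\Gamma(\hat\alpha,\hat\omega)$, compatibly with the toric maps and with symmetric exchanges, is the right reduction. The gap comes in the next step: it is false that the all-squarefree vertices form a union of connected components of $\mathcal{G}_{\Gamma}(\mu)$, and false that $\mathcal{G}_{\mathcal{M}}(\mu)$ is all of $\mathcal{G}_{\Gamma}(\mu)$. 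Your preservation argument only controls exchanges performed \emph{at} an all-squarefree vertex; edges of $\mathcal{G}(\mu)$ are undirected, and an exchange performed on two non-squarefree bases can land on an all-squarefree vertex. Concretely, take $N(\alpha)=\{1,2\}$ and $N(\omega)=\{3,4\}$, so $\mathcal{M}$ is the uniform matroid of rank $2$ on $4$ elements; then $N(\hat\alpha)=\{2,4\}$, $N(\hat\omega)=\{4,6\}$, and $\Gamma(\hat\alpha,\hat\omega)$ has $x_2^2$ (and $x_3^2$) among its bases. For $\mu=x_2^2x_3x_4$, the vertex $\{x_2^2,\,x_3x_4\}$ of $\mathcal{G}_{\Gamma}(\mu)$ is not all-squarefree, yet it is joined by an edge to the all-squarefree vertex $\{x_2x_3,\,x_2x_4\}$ (exchange one $x_2$ of $x_2^2$ for the $x_3$ of $x_3x_4$); moreover $\mathcal{G}_{\mathcal{M}}(\mu)$ consists of the single vertex $\{x_2x_3,\,x_2x_4\}$ and is a proper subgraph of $\mathcal{G}_{\Gamma}(\mu)$. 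So the claims that every vertex of $\mathcal{G}_{\Gamma}(\mu)$ is all-squarefree and that $\mathcal{G}^{\ell}_{\mathcal{M}}(\mu)$ and $\mathcal{G}^{\ell}_{\Gamma}(\mu)$ coincide both fail, and with them your derivation of the connectivity of $\mathcal{G}_{\mathcal{M}}(\mu)$ (an induced subgraph of a connected graph need not be connected) and of the unique-sink property needed for the Gr\"obner statement.

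The ingredients you assembled do yield a correct proof, but you must argue inside the squarefree subgraph rather than identify it with the whole graph; this is what the paper does. Transport $>_\ell$ to the base ring of $\mathcal{M}$, so that $\mathcal{G}^{L}_{\mathcal{M}}(\mu)$ is the directed subgraph of $\mathcal{G}^{\ell}_{\Gamma}(\mu')$ spanned by the all-squarefree vertices. If such a vertex is not thin, the specific exchange produced by Proposition~\ref{switch} is performed on two of its (squarefree) bases, so by your own one-way preservation observation the resulting directed edge stays inside $\mathcal{G}^{L}_{\mathcal{M}}(\mu)$; hence every sink of $\mathcal{G}^{L}_{\mathcal{M}}(\mu)$ is thin, and Lemma~\ref{uniquethin} says $\mathcal{G}_{\Gamma}(\mu')$ has only one thin vertex, so $\mathcal{G}^{L}_{\mathcal{M}}(\mu)$ has a unique sink. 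Theorem~\ref{Grobnerswap} (with Lemma~\ref{uniquesink} supplying the connectivity you were after) then gives both that the toric ideal of $\mathcal{M}$ is generated by symmetric exchange binomials and that these form a Gr\"obner basis under this order; the detour through the false statement that all vertices of $\mathcal{G}_{\Gamma}(\mu')$ are squarefree is unnecessary as well as incorrect.
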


\begin{proof}
Let $\sigma$ be a lattice path to the point $(n, r)$ with $N(\sigma) = \{a_1, a_2, \ldots, a_r\}$, where $a_1 < a_2 < \cdots < a_r$.  Define a lattice path $\overline{\sigma}$ to the point $(n+r, r)$ by $N(\overline{\sigma}) = \{a_1+1, a_2+2,  \ldots, a_r+r\}$, and note that $m(\overline{\sigma}) = x_{a_1}x_{a_2}\cdots x_{a_r}$.  

\begin{figure}[htp]
\centering
\includegraphics[height = 1.1in]{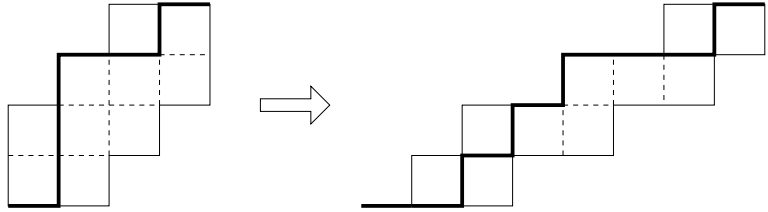}
\caption{A lattice path matroid $\mathcal{M}(\alpha, \omega)$ and the associated polymatroid $\Gamma(\overline{\alpha}, \overline{\omega})$.  If $\sigma$ is the bold path, note that $N(\sigma) = \{2, 3, 4, 7\}$ and $m(\overline{\sigma}) = x_2x_3x_4x_7$.}\label{path}
\end{figure}

Define a function from $\mathcal{M} = \mathcal{M}(\alpha, \omega)$ to $\Gamma = \Gamma(\overline{\alpha}, \overline{\omega})$ by $\sigma\rightarrow \overline{\sigma}$, and note that a lattice path $\sigma$ in between $\overline{\alpha}$ and $\overline{\omega}$ is in the image of this map if and only if it has no more than one north step along every line $x = i$, which is equivalent to $m(\sigma)$ being squarefree.

For a vertex $V = \{\sigma_1, \sigma_2, \ldots, \sigma_t\}$ of some $\G_{\mathcal{M}}(\mu)$, let $\overline{V} = \{\overline{\sigma_1}, \overline{\sigma_2}, \ldots, \overline{\sigma_t}\}$ denote the corresponding vertex of $\G_\Gamma(\mu')$ for some monomial $\mu'$.  We can now define a monomial order $>_{L}$ on the base ring of $\mathcal{M}$ by 
\[
M_V >_{L} M_{V'} \Leftrightarrow M_{\overline{V}} >_\ell M_{\overline{V'}}.
\]
Thus, the graph $\G_{\mathcal{M}}^{L}(\mu)$ is a directed subgraph of $\G_\Gamma^{\ell}(\mu')$ for some monomial $\mu'$.  Because a symmetric exchange between two squarefree monomials results in two squarefree monomials, Proposition \ref{switch} and Lemma \ref{uniquethin} imply that each directed graph $\G_{\mathcal{M}}^{L}(\mu)$ has a unique sink, and we can apply Theorem \ref{Grobnerswap}. 
\end{proof}

\noindent
\textbf{Acknowledgements.}  Thanks to Craig Huneke and Joe Bonin for many insightful conversations.



\bibliography{mybib}{}

\begin{thebibliography}{10}

\bibitem{federico}
Federico Ardila.
\newblock The {C}atalan matroid.
\newblock {\em J. Combin. Theory Ser. A}, 104(1):49--62, 2003.

\bibitem{blasiak}
Jonah Blasiak.
\newblock The toric ideal of a graphic matroid is generated by quadrics.
\newblock {\em Combinatorica}, 28(3):283--297, 2008.

\bibitem{bdn}
Joseph Bonin, Anna de~Mier, and Marc Noy.
\newblock Lattice path matroids: enumerative aspects and {T}utte polynomials.
\newblock {\em J. Combin. Theory Ser. A}, 104(1):63--94, 2003.

\bibitem{bdn2}
Joseph~E. Bonin and Anna de~Mier.
\newblock Lattice path matroids: structural properties.
\newblock {\em European J. Combin.}, 27(5):701--738, 2006.

\bibitem{eisenbud}
David Eisenbud.
\newblock {\em Commutative algebra}, volume 150 of {\em Graduate Texts in
  Mathematics}.
\newblock Springer-Verlag, New York, 1995.
\newblock With a view toward algebraic geometry.

\bibitem{fms}
Christopher Francisco, Jeff Mermin, and Jay Schweig.
\newblock Borel generators.
\newblock {\em Preprint}, 2010.

\bibitem{hh}
J{\"u}rgen Herzog and Takayuki Hibi.
\newblock Discrete polymatroids.
\newblock {\em J. Algebraic Combin.}, 16(3):239--268 (2003), 2002.

\bibitem{carly}
C.~Klivans.
\newblock {\em Shifted Matroid Complexes}.
\newblock PhD thesis, Massachusetts Institute of Technology, 2003.

\bibitem{oxley}
James~G. Oxley.
\newblock {\em Matroid theory}.
\newblock Oxford Science Publications. The Clarendon Press Oxford University
  Press, New York, 1992.

\bibitem{me}
Jay Schweig.
\newblock On the {$h$}-vector of a lattice path matroid.
\newblock {\em Electron. J. Combin.}, 17(1):Note 3, 6, 2010.

\bibitem{white}
Neil~L. White.
\newblock A unique exchange property for bases.
\newblock {\em Linear Algebra Appl.}, 31:81--91, 1980.

\end{thebibliography}
\bibliographystyle{plain}

\end{document}